\newtheorem{theorem}{Theorem}[section]
\newtheorem{lemma}[theorem]{Lemma}
\newenvironment{proof}{\noindent\emph{Proof.}}{\hfill$\square$\medskip}
\newcommand{\D}{\Delta}
\newcommand{\R}{\mathbb{R}}
\newcommand{\ve}{\varepsilon}
\title{Higher Order Conformally Invariant Equations  in ${\mathbb R}^3$  with Prescribed Volume} 
\author{Ali Hyder\thanks{The author is supported by the Swiss National Science Foundation, Grant No. P2BSP2-172064}\\ {\small Department of Mathematics, University of British Columbia, Vancouver BC V6T1Z2, Canada}\\ {\small \texttt{ali.hyder@math.ubc.ca} }\and Juncheng Wei\thanks{ The research is partially supported by NSERC} \\  {\small  Department of Mathematics, University of British Columbia, Vancouver BC V6T1Z2, Canada}\\  {\small \texttt{jcwei@math.ubc.ca} } }
\date{}
\begin{document}
\maketitle

\abstract
In this paper we study  the following conformally invariant  poly-harmonic equation  $$\D^mu=-u^\frac{3+2m}{3-2m}\quad\text{in }\mathbb{R}^3,\quad u>0,$$ with $m=2,3$. We prove the existence of  positive smooth  radial solutions with prescribed volume  $\int_{\mathbb{R}^3} u^\frac{6}{3-2m}dx$. We show that the set of all  possible values of the  volume is a bounded interval $(0,\Lambda^*]$ for $m=2$, and it is $(0,\infty)$ for $m=3$. This is in sharp contrast to $m=1$ case  in which the volume $\int_{\mathbb{R}^3} u^\frac{6}{3-2m}dx$ is a fixed value.

\section{Introduction to the problem}  We consider the negative exponent problem 

\begin{equation}\label{eq-main} \D^mu=-u^\frac{3+2m}{3-2m}\quad\text{in }\R^3,\quad u>0,\end{equation} 
where $m$ is either $2$ or $3$. Geometrically, if $u$ is a smooth solution to \eqref{eq-main} then the conformal metric $g_u:=u^\frac{4}{3-2m}|dx|^2$ ($|dx|^2$ is the Euclidean metric on $\R^3$) has constant $Q$-curvature on $\R^3$, see \cite{Branson, CGS,Choi-Xu, Ngo-6,XY, Yang-Zhu}.  Moreover, the volume of the metric $g_u$ is $$\int_{\R^3}dV_{g_u}=\int_{\R^3}\sqrt{|g_u|}dx=\int_{\R^3}u^\frac{6}{3-2m}dx,$$  which is invariant under the scaling $u_\lambda(x):=\lambda^\frac{3-2m}{2}u(\lambda x)$ with $\lambda>0$.

 Equation    \eqref{eq-main} belongs to the class of conformally invariant equations. When $m=1$ this is called Yamabe equation; while for $m=2$ it is $Q$-curvature equation. In recent years Problem \eqref{eq-main} has been extensively studied in \cite{Choi-Xu, Ngo2,Guerra, HW,Lai,MR,Xu} for $m=2$, in \cite{Ngo-6} for $m=3$ and in \cite{F-X,Li,Ngo} for higher order case (but to an integral equation). We recall that radial solutions to \eqref{eq-main} with $m=2$ has  either  exactly liner growth or exactly quadratic growth at infinity, that is,    $$\lim_{r\to\infty}\frac{u(r)}{r}\in (0,\infty)\quad\text{or }\lim_{r\to\infty}\frac{u(r)}{r^2}\in(0,\infty).$$    The solution with exactly linear growth is unique (up to a scaling) and is given by    \begin{align}\label{U4}  U_0(r)=\sqrt{\sqrt{1/15}+r^2}.\end{align}    However, there are infinitely many (radial or nonradial) solutions with quadratic growth, see \cite{Ngo2,Guerra,HW}.
For $m=3$, radial solutions grow either cubically or quatrically at infinity, that is,    $$\lim_{r\to\infty}\frac{u(r)}{r^3}\in (0,\infty)\quad\text{or }\lim_{r\to\infty}\frac{u(r)}{r^4}\in(0,\infty).$$    In this case also we have an explicit solution which grows cubically at infinity, namely     $$U_1(r)=\left( 315^{-\frac13}+r^2\right)^\frac32.$$    It is worth pointing out that both solutions  $U_0$ and $U_1$ can be obtained by pulling back the round metric of $S^3$ via stereographic projection, and they  satisfy an integral equation of the form    $$U(x)=c_m\int_{\R^3}|x-y|^{p}U^\frac{3+2m}{3-2m}(y)dy,$$    where $p=1$ for $m=2$ and $p=3$ for $m=3$.  Nevertheless, $U_1$ is not unique (up to scaling) among the  radial solutions having exactly cubic growth at infinity.

We now state our main results concerning the existence of radial solutions to \eqref{eq-main} with prescribed volume.  For $m=2$ we prove:
\begin{theorem}\label{thm4th}
There exists  a radial solution to    \begin{align}\label{eq-4th} \D^2 u=-\frac{1}{u^7}\quad\text{in }\R^3,\quad u>0, \quad \Lambda_u:=\int_{\R^3}\frac{dx}{u^6(x)}\end{align}     if and  only if $\Lambda_u\in (0,\Lambda^*]$, where $\Lambda^*$ is the  volume of the metric $g_{U_0}$, that is,    \begin{align}\label{vol4} \Lambda^*:=\int_{\R^3}\frac{dx}{U_0^6(x)}=\int_{\R^3}\frac{dx}{(\sqrt{1/15}+|x|^2)^3}.\end{align}     Moreover, if  $\Lambda_u=\Lambda^*$  for some radial solution $u$ to \eqref{eq-4th} then up to a scaling we have $u=U_0$.  
\end{theorem}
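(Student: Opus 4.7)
My plan is to parametrise every positive radial solution of \eqref{eq-4th} via a shooting method, study the volume as a continuous function of the shooting parameter, and pin down its range using the linear/quadratic growth dichotomy recalled just before the theorem. The scaling $u_\lambda(x)=\lambda^{-1/2}u(\lambda x)$ leaves both \eqref{eq-4th} and $\Lambda_u$ invariant, so I normalise $u(0)=1$; radial regularity forces $u'(0)=(\D u)'(0)=0$, and the sole remaining datum is $b:=\D u(0)\in\R$. Let $u_b$ be the resulting local smooth radial solution and $B\subset\R$ the set of $b$'s for which $u_b$ is positive on $[0,\infty)$. By the dichotomy, $B=B_1\sqcup B_2$, with $B_1$ collecting the linearly-growing solutions and $B_2$ the quadratically-growing ones. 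The cited uniqueness (up to scaling) of the linearly-growing solution identifies $B_1=\{b^\ast\}$ with the unique rescaling of $U_0$ meeting the normalisation, so $\Lambda(b^\ast)=\Lambda^\ast$.

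Next, standard continuous dependence for ODEs yields $b\mapsto u_b$ continuous uniformly on compacts, and the tail decay provided by the classification ($u_b^{-6}r^2\sim r^{-10}$ on $B_2$, $\sim r^{-4}$ at $b^\ast$) supplies uniform-in-$b$ integrable majorants, so that $\Lambda:B\to(0,\infty)$ is continuous. I would then analyse $\Lambda$ at the endpoints of $B_2$. On the end adjacent to $b^\ast$ the quadratic coefficient at infinity tends to $0$, $u_b$ approaches a rescaling of $U_0$ on compacts, and $\Lambda(b)\to\Lambda^\ast$. At the far endpoint (expected $b\to+\infty$) the quadratic coefficient diverges; a blow-down rescaling that absorbs this coefficient shows $u_b$ spreads out and $\Lambda(b)\to 0$. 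An intermediate-value argument across $B_2$ then realises every volume in $(0,\Lambda^\ast)$, while $b^\ast$ itself realises $\Lambda^\ast$.

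The main obstacle is the converse direction: showing $\Lambda_u\le\Lambda^\ast$ for every radial solution, with equality iff $u$ is a scaling of $U_0$. My proposed route is the pointwise comparison $u_b\ge U_0$ on $[0,\infty)$ under the common normalisation $u_b(0)=U_0(0)=1$; this immediately gives $u_b^{-6}\le U_0^{-6}$ and hence $\Lambda(b)\le\Lambda^\ast$, with equality forcing $u_b\equiv U_0$ by the rigidity of the pointwise inequality. Because the standard maximum principle fails for $\D^2$ on $\R^3$, I would pass to the Navier decomposition $\D u=w$, $\D w=-u^{-7}$ and propagate signs along the radial ODE for $v:=u_b-U_0$, using $v(0)=v'(0)=0$, $\D v(0)=b-b^\ast\ge 0$, and the super-linear growth of $v$ at infinity when $b\in B_2$. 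A Pohozaev-type identity for the pair $(u_b,U_0)$, together with the precise asymptotic expansions at infinity, is the most plausible tool to control boundary terms and ensure the comparison cannot be broken in the interior.
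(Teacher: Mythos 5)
Your overall strategy---shoot in the missing initial datum, use continuity and an intermediate-value argument to achieve every $\Lambda\in(0,\Lambda^*]$, and control the range via a pointwise comparison with $U_0$---is the same as the paper's; the choice of normalising $u(0)$ and varying $b=\D u(0)$, rather than normalising $\D u(0)$ and varying $u(0)=U_0(0)+\rho$ as the paper does, is an immaterial re-parametrisation, and your remark that $u_b\ge U_0$ immediately forces $\Lambda(b)\le\Lambda^*$ with rigidity at equality is precisely the paper's endgame. The existence half of your argument is essentially fine: monotonicity from the comparison lemma shows $B$ is a half-line, dominated convergence gives continuity of $b\mapsto\Lambda(b)$, and although your blow-down rescaling to get $\Lambda(b)\to 0$ is more elaborate than needed (the paper just notes $u_\rho\ge\rho+U_0$), it would work.

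The gap is in the converse, which you yourself flag as the main obstacle. Your proposed tools---the Navier split $\D u=w$, $\D w=-u^{-7}$, plus a Pohozaev identity for the pair $(u_b,U_0)$---do not deliver the bound: the authors explicitly remark at the end of the introduction that, unlike for the $4$-dimensional Liouville equation, the analogous Pohozaev identity here does \emph{not} yield the volume restriction, so that route is a dead end. What you are missing is the simple mechanism that actually closes the argument. Fix the normalisation so that $\bar u=u_\rho$ solves the IVP with $\D u_\rho(0)=\D U_0(0)$ and $u_\rho(0)=U_0(0)+\rho$; one must show $\rho\ge 0$. If $\rho<0$, then Lemma~\ref{comp} gives $u_\rho<U_0$, hence $\D^2 u_\rho=-u_\rho^{-7}<-U_0^{-7}=\D^2 U_0$, and the integral identity \eqref{formula-1} shows the difference $\D u_\rho-\D U_0$ is strictly decreasing, so $\D u_\rho(r)\le\D U_0(r)-\ve$ for $r\ge 1$ and some $\ve>0$. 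The decisive observation is that $\D U_0(\infty)=0$ (since $U_0$ grows only linearly): this forces $\D u_\rho\le-\ve/2$ for all large $r$, and then two more applications of \eqref{formula-1} give $u_\rho(r)\le C-C_\ve r^2$, contradicting positivity. No Pohozaev identity or asymptotic expansion is needed; the comparison lemma together with the vanishing of $\D U_0$ at infinity is the whole content of the converse. The same argument transfers verbatim to your normalisation, since $b<b^*$ still yields $\D u_b-\D U_0$ decreasing from the strictly negative value $b-b^*$, but your write-up never identifies this and instead points at a tool the paper rules out.
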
%

For $m=3$ we prove the existence of radial solution for every prescribed volume.
\begin{theorem}\label{thm-large} For every $\Lambda>0$ there exists a positive radial solution to
    \begin{align}\label{eq-6th}\D^3u=-\frac{1}{u^3} \quad\text{in } \R^3\end{align} such that \begin{align}\label{vol}\int_{\R^3}\frac{dx}{u^2(x)}=\Lambda.\end{align}   
\end{theorem}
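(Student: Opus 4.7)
The plan is to construct the required solutions by a two-parameter shooting argument on the radial ODE and then show that the volume functional achieves every positive value by a continuity argument. Note first that the scaling $u_\lambda(x):=\lambda^{-3/2}u(\lambda x)$ preserves both \eqref{eq-6th} and the volume integral \eqref{vol}, so different values of $\Lambda$ cannot be obtained by rescaling one solution; they must come from genuinely different radial profiles.

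\noindent\textbf{Shooting reduction and continuity of the volume.} A smooth radial solution of \eqref{eq-6th} is locally determined by six initial values at $r=0$; smoothness forces the three odd-order conditions $u'(0)=(\D u)'(0)=(\D^2 u)'(0)=0$, so the local family is three-parameter and indexed by $u(0),\D u(0),\D^2 u(0)$. Using scale invariance to normalize $u(0)=1$, I obtain a two-parameter family $u_{a,b}$ with $a=\D u(0)$, $b=\D^2 u(0)$; the explicit $U_1$ sits, after an appropriate scaling, at one such point. Define
\[\mathcal A:=\Bigl\{(a,b)\in\R^2: u_{a,b}>0\text{ on }[0,\infty)\text{ and }\textstyle\int_{\R^3}u_{a,b}^{-2}\,dx<\infty\Bigr\}.\]
By continuous ODE dependence together with the cubic/quartic growth dichotomy recalled in the introduction (either growth rate makes $u^{-2}$ integrable on $\R^3$), the set $\mathcal A$ is open and the map $\Lambda(a,b):=\int_{\R^3}u_{a,b}^{-2}\,dx$ is continuous on $\mathcal A$.

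\noindent\textbf{Surjectivity of $\Lambda$.} I would now produce a continuous path $\gamma(s)=(a(s),b(s))\subset\mathcal A$ along which $\Lambda(\gamma(s))\to 0$ at one end and $\Lambda(\gamma(s))\to+\infty$ at the other; the intermediate value theorem then yields every $\Lambda>0$. Pushing $\gamma$ in the direction of large $b$ I expect to force $u$ into the quartic regime with an arbitrarily large leading coefficient, so that $u^{-2}$ decays like $r^{-8}$ with a tiny prefactor and $\Lambda\to 0$. Moving in the opposite direction, I would engineer solutions that stay close to $1$ on increasingly large balls before their asymptotic growth sets in, which gives $\Lambda\ge c R^3\to\infty$. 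The precise quantitative verification of these two limits would combine scaling arguments with direct estimates on $u_{a,b}$ obtained by integrating the ODE.

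\noindent\textbf{Main obstacle.} The crux is to preserve global positivity of $u_{a,b}$ as $(a,b)$ traverses $\gamma$: the nonlinearity $-u^{-3}$ is singular at $u=0$, so even a local touching of zero is fatal, and I need a quantitative lower bound $u_{a,b}(r)\ge c(1+r)$ with $c$ depending continuously on $(a,b)$. I plan to obtain this by exploiting sign information on the iterated Laplacians: since $\D^3 u=-u^{-3}<0$, successive integrations of the radial expressions for $\D u$ and $\D^2 u$ should show that $\D^2 u$ is eventually negative and $\D u$ eventually positive, forcing $u$ to be essentially convex and increasing for large $r$. Closing these estimates \emph{uniformly} along $\gamma$, and in particular ruling out a finite-$r$ blow-up of $\D^2 u$ before the positivity of $u$ breaks, is the delicate point that I anticipate will require the most care.
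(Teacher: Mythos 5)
Your overall shooting framework is sound and mirrors the structure of the paper's argument (a parameter family of radial IVPs, continuity of the volume functional in the initial data, and an intermediate-value argument between $\Lambda\to 0$ and $\Lambda\to\infty$). The $\Lambda\to 0$ direction is also essentially right: for a fixed profile, pushing $\D u(0)$ upward gives a solution dominating a fixed positive one, hence small volume. But the substantive content of the theorem --- producing solutions with \emph{arbitrarily large} volume while keeping $u$ positive --- is exactly what you flag as the "main obstacle" and then leave unresolved, so as written this is a sketch of a strategy, not a proof.

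Two concrete gaps. First, your proposed mechanism for $\Lambda\to\infty$ ("engineer solutions that stay close to $1$ on increasingly large balls") cannot work in your normalization $u(0)=1$: with $u$ near $1$, the forcing $\D^3 u = -u^{-3}\approx -1$ drives $\D^2 u$ down at unit rate, which then pushes $u$ negative in $O(1)$ distance; there is no room to stretch the "flat" region. The paper circumvents this precisely by \emph{not} normalizing $u(0)$: it fixes $\D^2 u(0)=1$ and sends $u(0)=k\to\infty$, so the source term $-u^{-3}$ is uniformly tiny and $u$ stays large on a growing region. In your $(a,b)=(\D u(0),\D^2 u(0))$ coordinates after rescaling to $u(0)=1$, the paper's critical family sits along a diagonal with both $|a|$ and $b$ tending to infinity simultaneously --- not along "$b$ small," and not along any axis you would naively guess. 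Second, even granting the two endpoint limits, you need the path $\gamma$ to lie inside the positivity/integrability set $\mathcal A$; openness of $\mathcal A$ does not give you a connected arc joining the two regimes, and you give no argument that one exists. The paper manufactures such a path explicitly: for each large $k$ it defines $\ve_k^*=\sup\{\ve:\D u(0)=-\ve$ gives a positive entire solution$\}$, proves the extremal solution exists (Lemma \ref{existence}), proves it must satisfy $\D^2 u(\infty)=0$ by a perturbation argument (Lemma \ref{infty}), and then uses this vanishing --- i.e.\ the identity $1=\frac{1}{4\pi}\int_0^\infty t^{-2}\int_{B_t}u_k^{-3}\,dx\,dt$ --- together with a two-case analysis on $\min u_k$ to force $\int u_k^{-2}\to\infty$. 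None of these steps is anticipated in your proposal, and they are not routine: the identification of the extremal solution as the one with $\D^2 u(\infty)=0$ is the key structural lemma that replaces your heuristic "$u$ essentially convex and increasing for large $r$."

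In short: correct skeleton, but the load-bearing estimates (positivity along the path, the $\Lambda\to\infty$ limit, and the connectedness of the admissible parameter set) are all deferred, and the specific path you propose for large $\Lambda$ would fail. You would need something like the paper's $k\to\infty$, $\ve=\ve_k^*$ construction to close the argument.
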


A similar phenomena has already been exhibited in a higher order Liouville equation, namely    \begin{align}\label{liou} (-\D )^\frac n2 u=(n-1)!e^{nu}\quad\text{in }\R^n,\quad V:=\int_{\R^n}e^{nu}dx<\infty .\end{align}    (Here $V$ is the volume of the conformal metric $g_u=e^{2u}|dx|^2$). More precisely, if $u$ is a solution to \eqref{liou} with $n=4$ then necessarily $V\in (0,V^*]$, and $V=V^*$ if and only if $u$ is a spherical solution, that is, for some $\lambda>0$ and $x_0\in\R^n$ we have     $$u(x)=u_{\lambda,x_0}(x):=\log\left(\frac{2\lambda}{1+\lambda^2|x-x_0|^2}\right).$$     However, if $n\geq 5$ then  for every $V\in (0,\infty)$ there exists a radial solution to \eqref{liou}.  See \cite{CC,HY,H,Lin,M,WY} and the references therein.


Finally, we remark that the upper bound of $V$ in \eqref{liou} with $n=4$ comes from a Pohozaev type identity, and it holds for every solutions to \eqref{liou} (radial and non-radial). However, from a similar Pohozaev type identity one does not get the same conclusion on the volume of the metric  $g_u:=u^\frac{4}{3-2m}$, compare \cite[Lemma 2.3]{HW}.

\medskip

\noindent\textbf{Notations} For a radially symmetric function $u$ we will write $u(|x|)$ to denote the same function $u(x)$.

\section{Proof of the theorems}
We shall use the following comparison lemma of two radial solutions to $\D^n u=f(u)$, whose proof follows from  the ODE local uniqueness theorem, and a repeated use of  the identity \eqref{formula-1}. See also Lemma 3.2 in \cite{MR} and Proposition A.2 in \cite{Farina}. 
\begin{lemma}\label{comp} Let $f$ be a locally Lipschitz continuous and  monotone increasing function on $(0,\infty)$.  Let $u_1,u_2\in C^{2k}([0,R))$ be two positive solutions of    \begin{align*}\left\{ \begin{array}{ll} \D^ku=f(u)&\quad\text{on }(0,R)\\  \D^ju_1(0)\geq \D^j u_2(0)&\quad\text{for every }j\in J\\ (\D^ju_1)'(0)= (\D^j u_2)'(0)=0&\quad\text{for every }j\in J,\end{array}\right.\end{align*}    where $J:=\{0,1,\dots,k-1\} $.  Then $\D ^ju_1\geq \D^ju_2$ and $(\D^ju_1)'\geq (\D^j u_2)'$ on $(0,R)$ for every $j\in J $. Moreover, if $\D^ju_1(0)>\D^ju_2(0)$ for some $j\in J$ then   $\D ^ju_1> \D^ju_2$ and $(\D^ju_1)'> (\D^j u_2)'$ on $(0,R)$ for every $j\in J $.    \end{lemma}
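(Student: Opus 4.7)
The plan is to introduce the differences $w_j := \D^j u_1 - \D^j u_2$ for $j\in J$, which by hypothesis satisfy $w_j(0)\geq 0$ and $w_j'(0)=0$, together with the Laplacian hierarchy $\D w_j = w_{j+1}$ for $j=0,\dots,k-2$ and $\D w_{k-1}=f(u_1)-f(u_2)$. The aim is to propagate nonnegativity through this hierarchy so as to obtain $\D^j u_1\geq \D^j u_2$ and $(\D^j u_1)'\geq (\D^j u_2)'$ on $(0,R)$.

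The principal tool is the radial integration formula alluded to in the statement: for a smooth radial function $v$ on $[0,R)$ with $v'(0)=0$,
$$v'(r) = \frac{1}{r^2}\int_0^r s^2\, \D v(s)\, ds,$$
which immediately yields $\D v\geq 0 \Rightarrow v'\geq 0 \Rightarrow v(r)\geq v(0)$. I would first dispose of the degenerate case in which $w_j(0)=0$ for every $j\in J$: the first-order radial system equivalent to $\D^k u = f(u)$ with initial data $(\D^j u(0))_{j\in J}$ and vanishing first-derivative constraints is regular singular with locally Lipschitz right-hand side, so ODE local uniqueness gives $u_1\equiv u_2$ near $0$, and a standard open-closed continuation extends this to all of $[0,R)$.

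Assume instead $w_{j_\ast}(0)>0$ for some $j_\ast$, and set $R_\ast:=\sup\{r\in[0,R):w_0\geq 0\text{ on }[0,r]\}$, which is positive by continuity. On $[0,R_\ast]$, monotonicity of $f$ gives $\D w_{k-1}\geq 0$; by the integral identity $w_{k-1}'\geq 0$ and hence $w_{k-1}(r)\geq w_{k-1}(0)\geq 0$. Now $\D w_{k-2}=w_{k-1}\geq 0$, and iterating the same one-step argument downward through $j=k-2,k-3,\dots,0$ yields $w_j\geq 0$ and $w_j'\geq 0$ on $[0,R_\ast]$. Since $w_0'\geq 0$ there, $w_0$ is nondecreasing on $[0,R_\ast]$; if $w_0(R_\ast)>0$ continuity extends $w_0\geq 0$ past $R_\ast$, contradicting the definition of $R_\ast$ unless $R_\ast=R$, while if $w_0(R_\ast)=0$ then $w_0\equiv 0$ on $[0,R_\ast]$ and ODE local uniqueness applied at any interior point promotes equality of $u_1$ and $u_2$ on $[0,R)$, again giving the claim.

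The strict conclusion then follows by re-running the hierarchy under strict monotonicity of $f$: once $w_0(r)>0$ for $r>0$ small, one obtains $\D w_{k-1}(r)>0$, then $w_{k-1}'>0$ on $(0,R)$ from the integral formula, then $w_{k-1}>w_{k-1}(0)\geq 0$, and so on down to $w_0>0$ and $w_0'>0$. The main obstacle is the initial bootstrap near $r=0$ in the case where the only strict inequality occurs at some higher index $j_\ast>0$: using the radial identity $\D v(0)=3v''(0)$ for $v'(0)=0$ in $\mathbb{R}^3$, one has $w_{j_\ast-1}''(0)=\tfrac13 w_{j_\ast}(0)>0$, so $w_{j_\ast-1}(r)\approx \tfrac{r^2}{6}w_{j_\ast}(0)>0$ for small $r>0$, and this Taylor step must be iterated down the chain to reach level $0$; this is the most delicate part of the argument.
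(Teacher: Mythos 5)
Your plan matches the paper's sketch: the paper does not write out a proof but states that it ``follows from the ODE local uniqueness theorem, and a repeated use of the identity \eqref{formula-1}'', and refers to Lemma~3.2 of McKenna--Reichel and Proposition~A.2 of Farina--Ferrero. Your hierarchy $w_j=\D^j u_1-\D^j u_2$, the one-step integral implication $\D v\geq 0\Rightarrow v'\geq 0\Rightarrow v\geq v(0)$, the continuation via $R_\ast$, and the ODE-uniqueness disposal of the fully degenerate case are precisely the ingredients being alluded to, and the strict half correctly propagates the strict inequality both downward from the index with $w_{j_\ast}(0)>0$ and upward through $f$.

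Two places in your write-up need tightening. First, the claim that $R_\ast>0$ ``by continuity'' is not justified when $w_0(0)=0$: continuity of $w_0$ at a point where it vanishes gives no sign information. What saves you is exactly the Taylor/regularity observation you postpone to the strict part — since $w_0\in C^{2k}$ is radial with $w_0'(0)=0$, its Taylor expansion at $0$ has leading term a positive multiple of $w_{j_\ast}(0)\,r^{2j_\ast}$ with $2j_\ast<2k$, which forces $w_0>0$ on some $(0,\delta)$ (and if all $w_j(0)=0$, ODE uniqueness gives $w_0\equiv 0$). That argument should be invoked already when you define $R_\ast$, not only later. Second, the sentence handling $w_0(R_\ast)=0$ reads as though it gives the conclusion, but in the case you are in ($w_{j_\ast}(0)>0$ for some $j_\ast$) it is actually a contradiction: $w_0\equiv 0$ on $[0,R_\ast]$ forces $w_j\equiv 0$ there for every $j$, contradicting $w_{j_\ast}(0)>0$; so this branch simply cannot occur and $R_\ast=R$. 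Finally, it is worth noting explicitly that the strict conclusion genuinely requires $f$ to be \emph{strictly} increasing — with a merely non-decreasing $f$ the upward step $w_0>0\Rightarrow\D w_{k-1}>0$ fails (e.g.\ $f$ constant) and so does the lemma's strict statement — which is the reading the paper intends since it is applied to $f(u)=-u^{-q}$.
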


With  the help of above comparison lemma and the fact that $\D U_0(\infty)=0$ we prove Theorem \ref{thm4th}.

\medskip

\noindent{\bf\emph{Proof of Theorem \ref{thm4th}}} For $\rho\in (-U_0(0),\infty)$ we consider the   initial value problem     \begin{align}\label{urho}\left\{ \begin{array}{ll} \D^2u_\rho=-\frac{1}{u_\rho^7} \\  u_\rho(0)=U_0(0)+\rho\\  \D u_\rho(0)=\D U_0(0) \\ u_\rho'(0)=(\D u_\rho)'(0)=0.\end{array}\right.\end{align}    Then by ODE local existence theorem $u_\rho$ exists in a neighborhood of the origin. Moreover,  for every $\rho>0$ we have  $u_\rho>U_0$ on $(0,\infty)$, thanks to Lemma \ref{comp}. In fact, $u_\rho(r)\geq \rho+U_0(r)$ on $(0,\infty)$, which implies that    $$\lim_{\rho\to\infty}\int_{\R^3}\frac{dx}{u_\rho^6(x)}=0.$$ Since $0<u_\rho^{-6}\leq U_0^{-6}$ for $\rho\geq0$, by dominated convergence theorem, we have that    the map    $$[0,\infty)\ni\rho\mapsto \int_{\R^3}\frac{dx}{u_\rho^6(x)}$$   is continuous. Hence,   for every $\Lambda\in (0,\Lambda^*]$ there exists a solution $u$ to \eqref{eq-4th} with $\Lambda=\Lambda_u$. 

To prove the converse  we essentially follow \cite{Choi-Xu,Guerra,MR}.  Let   $u$  be a solution of \eqref{eq-4th} for some $\Lambda_u>0$.  Then we have $\D u>0$ in $\R^3$ (see e.g.  \cite[Lemma 2.2]{Choi-Xu}). We set $\bar u(x):=\lambda^\frac{-1}{2}u(\lambda x)$ where $\lambda>0$ is such that $\D \bar u(0)=\D U_0(0)$. Then we have $\Lambda_u=\Lambda_{\bar u}$, and $\bar u=u_\rho$ for some $\rho\in\R$, where $u_\rho$ is the solution to \eqref{urho}. We claim that $\rho\geq 0$. In order to prove the claim we assume by contradiction that $\rho<0$. Then, it follows from Lemma \ref{comp} that   $$\D u_\rho(r)\leq \D U_0(r)-\ve,\quad r\geq 1,$$ for some $\ve>0$.  Therefore, as $\D U_0(\infty)=0$, we have $\D u_\rho(r)\leq-\frac\ve2$ on $(R,\infty)$ for some $R>>1$. In particular, from the  identity \begin{align}\label{formula-1}w(r)=w(0)+\frac{1}{4\pi}\int_0^r\frac{1}{t^2}\int_{B_t}\D w(x)dxdt\quad  \text{for }w\in C^2_{rad}, \end{align}  for some $C_\ve>0$ we obtain     $$u_\rho(r)\leq C-C_\ve r^2\quad\text{ on }(0,\infty), $$   a contradiction as $u_\rho>0$ on $\R^3$.
Thus $\rho\geq 0$, and hence by Lemma \ref{comp} we have $\bar u\geq U_0$ on $(0,\infty)$. This in turn implies that  $\Lambda_{\bar u}\leq\Lambda^*$, and $\Lambda_{\bar u}=\Lambda^*$ if and only if $\bar u=U_0$.
\hfill $\square$
\bigskip


We now move to the proof of Theorem \ref{thm-large}. We start with the following lemma.

\begin{lemma}For $k$ large and  $\ve\in(0,1)$  there exists a positive entire radial solution to     \begin{align}\left\{ \begin{array}{ll}\label{ode-1} \D^3u=-\frac{1}{u^3} \\  u(0)=k\\  \D u(0)=-\ve \\\D^2 u(0)=1\\ u'(0)=(\D u)'(0)=(\D ^2u)'(0)=0.\end{array}\right.\end{align}       Moreover, if $u$ is a  positive entire radial solution to \eqref{ode-1} for some $\ve\in\R$ then  necessarily $\ve\leq \sqrt{\frac{6k}{5}}$, and the solution $u$ satisfies     \begin{align} k-\frac\ve6r^2\leq u(r)\leq k-\frac\ve6r^2+\frac{r^4}{120}\quad \text{on }(0,\infty).\label{upper}\end{align}   \end{lemma}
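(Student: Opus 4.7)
For the \emph{a priori bounds} in \eqref{upper} and $\ve \le \sqrt{6k/5}$, I iteratively integrate the radial identity \eqref{formula-1}, applied in turn to $\D^2 u$, $\D u$, and $u$. Since $u > 0$ makes $\D^3 u = -1/u^3 < 0$, applying \eqref{formula-1} to $\D^2 u$ yields $(\D^2 u)'(r) = -\tfrac{1}{r^2}\int_0^r s^2 u(s)^{-3}\,ds < 0$ for $r > 0$, so $\D^2 u \le 1$. Iterating once more gives $\D u(r) \le -\ve + r^2/6$, and a third iteration yields $u(r) \le k - \ve r^2/6 + r^4/120 =: \bar u(r)$. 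If $u$ is positive on all of $[0,\infty)$, then $\bar u > 0$ for every $r \ge 0$; minimizing $\bar u$ at $r^2 = 10\ve$ (value $k - 5\ve^2/6$) then forces $\ve \le \sqrt{6k/5}$.

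For the lower bound, the key claim is $\D^2 u \ge 0$ on $[0,\infty)$. If this fails, $\D^2 u(r_0) < 0$ for some $r_0 > 0$, and strict monotonicity of $\D^2 u$ (from $(\D^2 u)' < 0$ on $(0,\infty)$) gives $\D^2 u(r) \le \D^2 u(r_0) < 0$ for all $r \ge r_0$; two further integrations via \eqref{formula-1} then propagate $\D u(r) \to -\infty$ and ultimately $u(r) \to -\infty$, contradicting $u > 0$. Granted $\D^2 u \ge 0$, one has $(\D u)'(r) \ge 0$, hence $\D u \ge -\ve$, and a final integration yields $u(r) \ge k - \ve r^2/6$.

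The existence for $k$ large and $\ve \in (0,1)$ is more subtle. Local ODE theory gives a positive solution $u$ on a maximal interval $[0, R_{\max})$, and the upper bound above---which only requires $u > 0$---continues to hold there, ruling out blow-up; the only way extension can fail is $u \to 0$ at $R_{\max}$. Now $\bar u$ is the entire radial solution of $\D^3 \bar u = 0$ sharing the Cauchy data of $u$ at the origin through order $5$, so $w := \bar u - u$ satisfies $\D^3 w = 1/u^3 \ge 0$ with vanishing Cauchy data. Under the bootstrap hypothesis $u \ge \bar u/2$, one has $1/u^3 \le 8/\bar u^3$, and three applications of \eqref{formula-1} give $w(r) \le C r^4/k^{5/2}$ for an absolute constant $C$; the $k^{-5/2}$ decay comes from $\int_0^\infty s/\bar u(s)^3\,ds = O(k^{-5/2})$, thanks to the $r^4/120$ tail of $\bar u$. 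For $k$ large, $Cr^4/k^{5/2} \le \bar u(r)/3$ uniformly in $r$, so $u \ge 2\bar u/3 > 0$ on $[0, R_{\max})$, strictly improving the bootstrap and forcing $R_{\max} = \infty$.

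The main technical obstacle is this last step: obtaining a \emph{uniform}-in-$r$ bound on $w$. A naive argument using only a constant lower bound for $u$ gives $w \lesssim r^6/k^3$, useful only for $r \lesssim k^{2/3}$; the quartic tail of $\bar u$ is essential to extend the estimate to all of $[0,\infty)$, and is precisely what lets ``$k$ large'' be sufficient for global existence.
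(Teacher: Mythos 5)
Your a priori analysis follows the paper's scheme closely: $\D^3 u < 0$ gives $\D^2 u$ strictly decreasing, hence $\D^2 u \le 1$; iterating \eqref{formula-1} gives the upper bound $u \le \bar u := k - \ve r^2/6 + r^4/120$; and minimizing $\bar u$ at $r^2 = 10\ve$ gives $\ve \le \sqrt{6k/5}$. The one genuine difference here is that you prove $\D^2 u \ge 0$ directly (if $\D^2 u(r_0) < 0$, monotonicity and two further integrations force $u \to -\infty$), whereas the paper cites this as a known fact from Lemma 2.2 of \cite{Ngo-6}; your self-contained argument is correct and arguably preferable.

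For the existence part, you and the paper both run a continuity/bootstrap argument whose viability hinges on the quartic tail, but the bootstrap invariants differ. The paper lets $\bar\delta$ be the largest radius on which $\D^2 u \ge \tfrac12$, derives $u \ge \tfrac k2 + \tfrac{r^4}{250}$ there, and fixes $k_0$ large enough that the integral $\frac{1}{4\pi}\int_0^\infty t^{-2}\int_{B_t}(k_0/2 + |x|^4/250)^{-3}\,dx\,dt \le \tfrac13$, which forces $\D^2 u \ge \tfrac23 > \tfrac12$ and hence $\bar\delta = \delta = \infty$. You instead bootstrap on $u \ge \bar u/2$ against the explicit polynomial $\bar u$ solving $\D^3\bar u = 0$ with the same Cauchy data, estimate $w = \bar u - u$ via $\D^3 w = u^{-3} \le 8\bar u^{-3}$, and exploit $\int_0^\infty s\,\bar u(s)^{-3}\,ds = O(k^{-5/2})$ (a rescaling $s^2 \sim \sqrt{k}\,t$ makes this transparent). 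Both approaches are correct; the paper's version avoids computing explicit decay rates in $k$, while yours is a bit more quantitative and makes the role of the quartic tail more visible. Net effect is the same: $k$ large forces global existence.
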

\begin{proof} It follows from the ODE local existence theorem that for  every $\ve>0$ there exists a  unique positive solution  to \eqref{ode-1} in a neighborhood of the origin.  We let $(0,\delta)$ to be the maximum interval of existence. 

From the   identity \eqref{formula-1} we see  that $\D^2 u$ is strictly monotone decreasing on $(0,\delta)$. Let $\bar \delta \in (0,\delta]$ be the largest number such that     \begin{align}\label{1/2}\D^2u\geq\frac12\quad\text{on }(0,\bar\delta).\end{align}   
Using \eqref{1/2} in  \eqref{formula-1} with $w=\D u$ one obtains     \begin{align*} \D u(r)&\geq -\ve+\frac {1}{12} r^2\quad \text{for } r\in (0,\bar\delta) .\end{align*}     Again by \eqref{formula-1} with $w=u$ we obtain for $r\in (0,\bar\delta)$  \begin{align}\label{u-lower}u(r)\geq k-\frac\ve6r^2+\frac{r^4}{240}\geq\frac k2 +\frac{r^4}{250},\end{align} for $k\geq k_0$ for some $k_0$ sufficiently large and  for every $\ve\in (0,1)$. We can also choose $k_0$ large enough so that     $$\frac{1}{4\pi}\int_0^\infty\frac{1}{t^2}\int_{B_t}\frac{dx}{\left(\frac {k_0}{2}+\frac{|x|^4}{250}\right)^3}dt\leq\frac13.$$     Now we use \eqref{u-lower} in \eqref{formula-1} with $w=\D^2u$ to obtain a lower bound of $\D^2u$. Indeed,  for  $k\geq k_0$ and   $r\in(0,\bar\delta)$    we have     \begin{align} \D^2u(r) &\geq 1-\frac{1}{4\pi}\int_0^r\frac{1}{t^2}\int_{B_t}\frac{dx}{\left(\frac k2+\frac{|x|^4}{250}\right)^3}dt \notag \\  &\geq 1-\frac{1}{4\pi}\int_0^\infty\frac{1}{t^2}\int_{B_t}\frac{dx}{\left(\frac {k_0}{2}+\frac{|x|^4}{250}\right)^3}dt \notag\\ &\geq \frac23.\end{align}      Thus, from the definition of $\bar\delta$ we get that $\bar\delta=\delta$. In particular, \eqref{u-lower} holds on $(0,\delta)$.  This shows that $\delta=\infty$, and we conclude the   first part of the lemma.

To prove \eqref{upper} we let $u$ be a positive entire radial solution to \eqref{ode-1} for some $\ve\in\R$. It follows from \eqref{formula-1} that   $\D^2u$ is strictly monotone decreasing on $(0,\infty) $. Therefore,   as $\D^2u>0$ in $\R^3$ (see e.g. \cite[Lemma 2.2]{Ngo-6}), we get   $$0\leq \D^2 u(\infty)\leq \D^2 u\leq 1\quad\text{ on }(0,\infty).$$     This implies that $\D u$ is monotone increasing on $(0,\infty)$, and  a repeated use of \eqref{formula-1} gives \eqref{upper}. Finally, the upper bound of $u$ in \eqref{upper} and the positivity of $u$ implies that $\ve\leq \sqrt{\frac{6k}{5}}$.

We conclude the lemma.
\end{proof}

\medskip
As a consequence of the above lemma  the number $\ve_k^*$ given by (for $k$ large)     $$\ve_k^*:=\sup\left\{\ve>0:\eqref{ode-1} \text{ has a positive entire solution}\right\} $$    exists, and it satisfies the estimate $\ve_k^*\leq  \sqrt{\frac{6k}{5}}$. Moreover, for every $\ve\in (-\infty,\ve_k^*)$ there exists a positive entire solution to \eqref{ode-1}, thanks to Lemma \ref{comp}.   

\begin{lemma}\label{existence} For $k$ large \eqref{ode-1} has a positive entire solution with $\ve=\ve_k^*$.\end{lemma}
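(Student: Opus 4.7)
The plan is a shooting argument at the extremal parameter $\ve_k^*$. Pick a sequence $\ve_j\nearrow\ve_k^*$ and let $u_j=u_{\ve_j}$ denote the positive entire solutions of \eqref{ode-1} corresponding to $\ve=\ve_j$, which exist by the very definition of $\ve_k^*$ together with Lemma \ref{comp}. Since $f(u)=-u^{-3}$ is monotone increasing on $(0,\infty)$, Lemma \ref{comp} yields that $u_j$, $\D u_j$ and $\D^2 u_j$ are all pointwise monotone decreasing in $j$ on $(0,\infty)$. In particular $u_j$ converges pointwise to some nonnegative function, and one would like the limit to be a classical solution with $\ve=\ve_k^*$.

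Next, by ODE local existence and continuous dependence on initial data, the Cauchy problem \eqref{ode-1} with $\ve=\ve_k^*$ admits a unique classical solution $u_{\ve_k^*}$ on a maximal interval $[0,r_*)$, and $u_j\to u_{\ve_k^*}$ in $C^6_{\mathrm{loc}}([0,r_*))$ as $j\to\infty$; the lemma is proved once $r_*=\infty$ is established. Since $\D^2 u_j>0$ by \cite[Lemma 2.2]{Ngo-6}, passing to the limit gives $\D^2 u_{\ve_k^*}\geq 0$ on $[0,r_*)$. Moreover $\D^2 u_{\ve_k^*}$ is strictly decreasing, because $(\D^2 u_{\ve_k^*})'(r)=-\frac{1}{4\pi r^2}\int_{B_r}u_{\ve_k^*}^{-3}\,dx<0$; hence $0\leq \D^2 u_{\ve_k^*}\leq 1$ throughout $[0,r_*)$. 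Two applications of \eqref{formula-1} then give $|\D u_{\ve_k^*}(r)|\leq \ve_k^*+r^2/6$ and $|u_{\ve_k^*}'(r)|\leq Cr$ on $[0,r_*)$ with $C$ depending only on $\ve_k^*$ and $r_*$. Together with the upper bound \eqref{upper}, this prevents $u_{\ve_k^*}$ from blowing up, so if $r_*<\infty$ the maximality forces $u_{\ve_k^*}(r)\to 0$ as $r\to r_*^-$.

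The main obstacle is to rule out the possibility $r_*<\infty$, and I would argue by contradiction. The Lipschitz bound $|u_{\ve_k^*}'|\leq Cr_*$ combined with the terminal value $u_{\ve_k^*}(r_*)=0$ yields $u_{\ve_k^*}(s)\leq Cr_*(r_*-s)$ for $s$ close to $r_*$, hence $u_{\ve_k^*}^{-3}(s)\geq C'(r_*-s)^{-3}$, a function not integrable at $s=r_*$. Inserting this into the identity obtained from \eqref{formula-1} applied to $w=\D^2 u_{\ve_k^*}$,
\[
\D^2 u_{\ve_k^*}(r)=1-\int_0^r\frac{1}{t^2}\int_0^t\frac{s^2}{u_{\ve_k^*}^3(s)}\,ds\,dt,
\]
a direct computation shows the double integral grows at least like $(r_*-r)^{-1}$ as $r\to r_*^-$, so $\D^2 u_{\ve_k^*}(r)\to-\infty$, contradicting $\D^2 u_{\ve_k^*}\geq 0$ on $[0,r_*)$. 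Therefore $r_*=\infty$ and $u_{\ve_k^*}$ is a positive entire solution of \eqref{ode-1} with $\ve=\ve_k^*$, completing the proof.
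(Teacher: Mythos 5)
Your proof is correct and takes essentially the same route as the paper's: assume the maximal interval $[0,r_*)$ is finite, deduce that $u\to0$ at $r_*$ together with a Lipschitz upper bound near $r_*$, and use the resulting non-integrability of $u^{-3}$ to drive $\D^2 u$ to $-\infty$, a contradiction. The one organizational difference is that the paper reaches the contradiction on the approximating entire solutions $u_n$ (showing $\D^2 u_n(R+3)\to-\infty$, contradicting $\D^2 u_n>0$), whereas you first pass to the limit to obtain $\D^2 u_{\ve_k^*}\geq 0$ on $[0,r_*)$ and then contradict that directly on the limit solution; both versions are valid.
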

\begin{proof} For simplicity we ignore the subscript $k$ and we write $\ve^*$ instead of $\ve_k^*$.  Let $u$ be the solution to \eqref{ode-1} with $\ve=\ve^*$, and let $(0,R)$ be the maximum interval of existence. We assume by contradiction that $R<\infty$.  Then necessarily  we have    $$\lim_{r\to R-}u(r)=0. $$   
It follows from the definition of $\ve^*$   that there exists a sequence of positive entire solutions  $(u_n)$  to \eqref{ode-1} with $\D u_n(0)\downarrow -\ve^*$.  Then, from the continuous dependence of the solutions  on the  initial data, we have that $u_n\to u$ locally uniformly in $[0,R)$.  In particular, there exists $x_n\to R$ such that $u_n(x_n)\to0$.   We claim that there exists $C>0$ such that    \begin{align}\label{clm1}u_n(r)\leq u_n(x_n)+C(r-x_n)\quad\text{for }x_n\leq r\leq x_n+1.\end{align}    Indeed, as  $0<\D^2 u_n\leq 1$ on $(0,\infty)$, by \eqref{formula-1} we obtain    $$- \ve^*\leq \D u_n(r)\leq r^2\quad\text{on }(0,\infty).$$    This gives $|u_n'|\leq C$ on $(0,R+3)$ for some $C>0$, and  hence we have \eqref{clm1}. Therefore, by \eqref{formula-1}  and together with \eqref{clm1} we get     \begin{align*} \D^2u_n(R+3)&\leq 1-\frac{1}{4\pi}\int_{R+2}^{R+3}\frac{1}{t^2}\int_{x_n<|x|<x_n+1}\frac{dx}{(u_n(x_n)+C(|x|-x_n))^3}dt\\ &\leq 1-\frac{1}{4\pi}\frac{1}{(R+3)^2}\int_{x_n<|x|<x_n+1}\frac{dx}{(u_n(x_n)+C(|x|-x_n))^3} \\ &\xrightarrow{n\to\infty}-\infty,\end{align*}    a contradiction as $\D^2 u_n>0$ on $(0,\infty)$.

We conclude the lemma.
\end{proof}

\begin{lemma}\label{infty}Let $u$ be a positive entire radial solution to \eqref{eq-6th}.  Assume that $\D^2 u(\infty)>0$. Then there exists a positive entire radial solution $v$ to \eqref{eq-6th} such that    $$v(0)=u(0),\quad \D v(0)<\D u(0)\quad\text{and }\D^2v(0)=\D^2u(0).$$     \end{lemma}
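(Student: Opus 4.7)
The plan is to construct $v$ as a small perturbation of $u$. For $\delta>0$, let $v_\delta$ be the local radial solution of $\D^3v=-v^{-3}$ satisfying $v_\delta(0)=u(0)$, $\D v_\delta(0)=\D u(0)-\delta$, $\D^2 v_\delta(0)=\D^2 u(0)$ (with vanishing first radial derivatives), and let $[0,R_\delta)$ denote its maximal interval of positive existence. The goal is to show that $v_\delta$ is a positive entire solution for every sufficiently small $\delta>0$; setting $v:=v_\delta$ then gives the conclusion, since $\D v(0)<\D u(0)$ is automatic from the initial data.

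The hypothesis $c:=\D^2 u(\infty)>0$ enters through the quartic growth of $u$: since $\D^3 u=-u^{-3}<0$, $\D^2 u$ is strictly decreasing by \eqref{formula-1}, so $\D^2 u\geq c$ on $\R^3$; iterating \eqref{formula-1} twice then yields $u(r)\geq c r^4/120$ for $r$ large, so $u^{-4}(r)\leq C r^{-16}$ at infinity and every $u^{-4}$-weighted integral that appears below is finite. Set $W_\delta:=u-v_\delta$; by Lemma~\ref{comp} applied to $(u_1,u_2)=(u,v_\delta)$ with strict inequality at $j=1$, we have $W_\delta,\D W_\delta,\D^2 W_\delta>0$ on $(0,R_\delta)$, with $(W_\delta,\D W_\delta,\D^2 W_\delta)(0)=(0,\delta,0)$ and $\D^3 W_\delta=v_\delta^{-3}-u^{-3}>0$.

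Carry out a bootstrap on $r^*_\delta:=\sup\{r\in(0,R_\delta):W_\delta\leq u/2\text{ on }[0,r]\}$. On this interval $v_\delta\geq u/2$, so the algebraic identity $v_\delta^{-3}-u^{-3}=(u^3-v_\delta^3)/(u v_\delta)^3$ together with $u^3-v_\delta^3\leq 3u^2 W_\delta$ gives $\D^3 W_\delta\leq 24 W_\delta/u^4$. Iterating \eqref{formula-1} three times starting from $\D^2 W_\delta$ and exchanging the order of integration by Fubini produces a Volterra-type inequality
\begin{align*}
\D^2 W_\delta(r)\leq C_0(u)\,\delta+\int_0^r \D^2 W_\delta(\tau)\,H(\tau)\,d\tau\qquad\text{on }[0,r^*_\delta],
\end{align*}
in which the kernel $H(\tau)$ decays like $\tau^{-11}$ at infinity (this comes from the factor $u^{-4}\lesssim r^{-16}$) and is bounded near $0$, hence integrable over $(0,\infty)$. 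Gronwall's inequality then gives $\D^2 W_\delta\leq C(u)\delta$ on $[0,r^*_\delta]$, and feeding this bound back into \eqref{formula-1} yields the pointwise estimate $W_\delta(r)\leq C'(u)\,\delta\,(r^2+r^4)$ on the same interval.

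If $r^*_\delta<R_\delta$, then $W_\delta(r^*_\delta)=u(r^*_\delta)/2$; combining this with the lower bound $u(r^*_\delta)\geq c(r^*_\delta)^4/120$ (for $r^*_\delta$ large, while $u$ is bounded below by a positive constant on any compact) and with the upper bound $W_\delta(r^*_\delta)\leq C'(u)\,\delta\,((r^*_\delta)^2+(r^*_\delta)^4)$ forces $\delta\geq \delta_0(u)>0$, contradicting the choice of $\delta$ arbitrarily small. Hence $r^*_\delta=R_\delta$ for small $\delta$, so $v_\delta\geq u/2>0$ on $[0,R_\delta)$ and the solution extends to all of $[0,\infty)$, i.e., $R_\delta=\infty$. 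Taking any such $\delta$ and $v:=v_\delta$ completes the proof. The main difficulty lies in deriving and closing the Volterra/Gronwall inequality; everything hinges on the integrability of the $u^{-4}$-weighted kernels, which in turn depends critically on the quartic growth of $u$ forced by the hypothesis $\D^2 u(\infty)>0$.
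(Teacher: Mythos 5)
Your proposal is correct and closes all the key estimates, but it routes through a Gronwall argument where the paper instead combines continuous dependence on initial data with an explicit tail estimate. Both proofs share the same skeleton: define the perturbed IVP with $\Delta v(0)=\Delta u(0)-\delta$, note by Lemma~\ref{comp} that $0<v<u$ with $W:=u-v$ and $\Delta W,\Delta^2 W$ positive and increasing, bootstrap on the set where $v\geq u/2$, and exploit the quartic growth $u\gtrsim r^4$ forced by $\Delta^2 u(\infty)>0$ to keep the relevant weighted integrals finite. Where they diverge: the paper fixes $R_1$ so that $\int_{R_1}^\infty t^{-2}\int_{B_t}u^{-3}\,dx\,dt<\ve$, invokes continuous dependence to make $u-v\leq\ve$ on $[0,R_1]$, and then combines the compact-set smallness with the tail smallness to get $\Delta^2 u-\Delta^2 v\leq C\ve$, feeding this back through \eqref{formula-1} to obtain $v\geq \tfrac23 u$; you instead observe the pointwise Lipschitz bound $\Delta^3 W=v^{-3}-u^{-3}\leq 24 W/u^4$ on the set $v\geq u/2$, unwind \eqref{formula-1} with Fubini and the monotonicity of $\Delta^2 W$, $\Delta W$, $W$ to get $W\leq \tfrac{\delta}{6}r^2+\tfrac{r^4}{36}\Delta^2 W(r)$ and hence a Volterra inequality $\Delta^2 W(r)\leq C_0\delta+\int_0^r H(s)\Delta^2 W(s)\,ds$ with $H(s)\sim s^5 u(s)^{-4}\in L^1(0,\infty)$, and then Gronwall gives $\Delta^2 W\leq C\delta$ uniformly. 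Your version has the advantage of producing a quantitative $O(\delta)$ bound directly without appealing to continuous dependence, which makes the final contradiction at $r^*_\delta$ cleaner; the paper's version avoids Gronwall but must separately tune two smallness parameters ($R_1$ and $\ve$) and choose $\rho$ afterward. Both are sound; yours is a legitimate, slightly more systematic alternative.
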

\begin{proof} For $\rho>0$ small we consider the initial value problem     \begin{align}\left\{ \begin{array}{ll}\label{ode-v} \D^3v=-\frac{1}{v^3} \\  v(0)=u(0)\\  \D v(0)=\D u(0)-\rho \\ \D^2 v(0)=\D^2 u(0)\\ v'(0)=(\D v)'(0)=(\D^2 v)'(0)=0.\end{array}\right.\end{align}    Since $\D^2 u(\infty)>0$, it follows that $u(r)\geq \delta r^4$ at infinity for some $\delta>0$. Therefore, we can choose $\rho_0>0$ small so that    $$\rho_0 r^2\leq \frac16 u(r)\quad\text{on }(0,\infty).$$      We fix $R_1>>1$ such that     $$\int_{R_1}^\infty\frac{1}{t^2}\int_{B_t}\frac{dx}{u^3(x)}dt<\ve,$$    where $\ve>0$ will be chosen later. By continuous dependence on the initial data we can choose $\rho\in (0,\rho_0)$ sufficiently small such that  the solution $v=v(\rho,u)$ to \eqref{ode-v} exists on $(0,R_1)$ and  it satisfies    $$u-v\leq \ve\quad\text{on }(0,R_1) . $$     We claim that for such $\rho>0$ the solution $v$ exists entirely.

In order to prove the claim we let $R_2>0$ (possibly the largest one) be such that $v\geq \frac u2$ on $(0,R_2)$. (Note that $v\leq u$ on the  common interval of existence, and for $\ve>0$ small enough we have $R_2>R_1$). Then for $0<r<R_2$ we have    \begin{align*} \D^2 v(r)-\D^2u(r)&=O(1)\int_0^r\frac{1}{t^2}\int_{B_t}\frac{u(x)-v(x)}{u(x)v^3(x)}dxdt\\ &\geq -C_1\ve-O(1)\int_{R_1}^{R_2}\frac{1}{t^2}\int_{B_t}\frac{dx}{u^3(x)}dt \\ &\geq -C_2 \ve.\end{align*}     The above estimate and a repeated use of \eqref{formula-1} leads to     \begin{align*}v(r)\geq u(r)-\frac{\rho}{6}r^2 - C_3\ve r^4.\end{align*}     Now we fix $\ve>0$ sufficiently small so that $C_3\ve r^4<\frac 16u(r)$ on $(0,\infty)$. Then we have    $$v(r)\geq\frac23 u(r)\quad\text{on }(0,R_2).$$    Thus, $v\geq\frac u2$ on $(0,R_2)$ implies that $v\geq \frac23u$ on $(0,R_2)$, and hence $R_2=\infty$. 

 This finishes the proof of the claim.  \end{proof}

\medskip

\noindent{\bf\emph{Proof of Theorem \ref{thm-large}}}  Let $(u_k)$ be a sequence of positive entire radial solutions to \eqref{ode-1} with $\ve=\ve_k^*$  as given  by Lemma \ref{existence}. We claim that      \begin{align}\label{to-infty}\int_{\R^3}\frac{dx}{u_k^2(x)}\xrightarrow{k\to\infty} \infty.\end{align}   
First we note  that   $\D^2 u_k(\infty)=0$, that is    \begin{align}\label{10}1=\frac{1}{4\pi}\int_0^\infty\frac{1}{t^2}\int_{B_t}\frac{dx}{u_k^3(x)}dt,\end{align}     which is a consequence of  Lemma \ref{infty}  and the definition of $\ve_k^*$. Moreover,    $$u_k\to \infty\quad\text{locally uniformly in } [0,\infty),$$    thanks to \eqref{upper} and the estimate $\ve_k^*\leq  \sqrt{\frac{6k}{5}}$. Now we consider the following two cases, and we show that \eqref{to-infty} holds in each case.

\medskip

\noindent\textbf{Case 1} $\min_{(0,\infty)}u_k\to\infty$.

Since $u_k\to\infty$ locally uniformly in $\R^3$, from \eqref{10} we obtain     \begin{align*} 1=o(1)+\frac{1}{4\pi}\int_1^\infty\frac{1}{t^2}\int_{B_t}\frac{dx}{u_k^3(x)}dt\leq o(1)+\frac{1}{4\pi\min_{\R^3}u_k}\int_{\R^3}\frac{dx}{u_k^2(x)},\end{align*}    which gives \eqref{to-infty}.

 \medskip

\noindent\textbf{Case 2} $\min_{(0,\infty)}u_k=:u_k(x_k)\leq C$.

 Since $u_k\to\infty$ locally uniformly in $\R^3$, we have $x_k\to\infty$. We claim that    $$u_k(x_k+r)\leq u_k(x_k)+1\quad\text{for }0\leq r\leq \frac{1}{x_k}.$$    In order to prove the claim we  note that $u_k'\geq0$  on $[x_k,\infty)$ and $u_k'(x_k)=0$. Moreover, as $\D^2 u_k\leq \D^2 u_k(0)=1$, by \eqref{formula-1} we have    $$u_k''(x_k+r)+\frac{2}{x_k+r}u_k'(x_k+r)=\D u_k (x_k+r)\leq \frac{1}{6}(x_k+r)^2.$$     Hence, $u_k''(x_k+r)\leq \frac16(x_k+r)^2$,  and by a Taylor expansion, we have our claim. Therefore, as  $x_k\to\infty$, we get     \begin{align*}\int_{x_k<|x|<x_k+\frac{1}{x_k}}\frac{dx}{u_k^2(x)}&\geq \frac{1}{(1+u_k(x_k))^2}\left((x_k+\frac{1}{x_k})^3-x_k^3\right)\\&\geq \frac{3x_k}{(1+u_k(x_k))^2}\\&\xrightarrow{k\to\infty}\infty.\end{align*}    This proves \eqref{to-infty}.

Theorem \ref{thm-large} follows immediately as the integral in \eqref{to-infty} depends continuously on the  initial data, and for every fixed $k$ (large)    $$\int_{\R^3}\frac{dx}{u_{\rho,k}^2(x)}\xrightarrow{\rho\to\infty}0,$$    where $u_{\rho,k}$ is the entire positive solution to \eqref{ode-1} with $\D u_{\rho,k}(0)=\rho>-\ve_k^*$ .
\hfill  $\square$


\begin{thebibliography}{99}


\bibitem{Branson}
\newblock T. P. Branson, 
\newblock Group representations arising from Lorentz conformal geometry, 
\newblock \emph{J. Funct. Anal.}, \textbf{74} (1987), 199-291.


\bibitem{CGS} 
\newblock  L. Caffarelli, B. Gidas and J. Spruck, 
\newblock   Asymptotic symmetry and local behavior of semilinear elliptic equations with critical Sobolev growth,
\newblock \emph{Comm. Pure Appl. Math.},  \textbf{42} (1989), 271-297.

\bibitem{CC}
\newblock S-Y. A. Chang and W. Chen, 
\newblock   A note on a class of higher order conformally covariant equations,
\newblock \emph{Discrete Contin. Dynam. Systems}, \textbf{63} (2001), 275-281.



\bibitem{Choi-Xu}
\newblock Y. S. Choi and X. Xu, 
\newblock  Nonlinear biharmonic equations with negative exponents, 
\newblock \emph{J. Diff. Equations}, \textbf{246} (2009), 216-234.

\bibitem{Ngo2}
\newblock  T. V. Duoc and Q. A.  Ng\^o,  
\newblock  A note on positive radial solutions of $\Delta^2 u+u^{-q}=0$ in $\R^3$ with exactly quadratic growth at infinity,
\newblock \emph{Diff. Int. Equations},  \textbf{30} (2017), no. 11-12, 917-928.

\bibitem{Ngo-6} 
\newblock  T. V. Duoc and Q. A. Ng\^o, 
\newblock  Exact growth at infinity for radial solutions of $\Delta^3u+u^{-q}=0$ in $\mathbb{\R}^3$, 
\newblock  Preprint (2017), \url{ftp://file.viasm.org/Web/TienAnPham-17/Preprint_1702.pdf}.

\bibitem{Farina} 
\newblock  A. Farina   and  A. Ferrero, 
\newblock Existence and stability properties of entire solutions to the polyharmonic equation $(-\Delta u)^m=e^u$  for any $m>1$, 
\newblock\emph{Ann. Inst. H. Poincar\'e Anal. Non Lin\'eaire},  \textbf{33} (2016), 495-528.

\bibitem{F-X}
\newblock  X. Feng and  X.  Xu, 
\newblock   Entire solutions of an integral equation in $\R^5$, 
\newblock \emph{ISRN Math. Anal.},  (2013), Art. ID 384394, 17 pp.  

\bibitem{Guerra}
\newblock  I. Guerra, 
\newblock  A note on nonlinear biharmonic equations with negative exponents,
\newblock \emph{J. Differential Equations}, \textbf{253} (2012), 3147-3157.

 \bibitem{HY}
 \newblock  X. Hunag and D. Ye, 
 \newblock   Conformal metrics in $\mathbb{R}^{2m}$ with constant $Q$-curvature and arbitrary volume,
 \newblock \emph{Calc. Var. Partial Differential Equations},   \textbf{54} (2015), 3373-3384.

 \bibitem{H}
 \newblock  A. Hyder, 
 \newblock Conformally Euclidean metrics on $\R^n$ with arbitrary total Q-curvature,
 \newblock \emph{Anal. PDE}, \textbf{10} (2017), no. 3, 635–-652.


 \bibitem{HW}
 \newblock A. Hyder and  J. Wei,
 \newblock  Non-radial solutions to a biharmonic equation with negative exponent,
 \newblock  Preprint (2018), \url{http://www.math.ubc.ca/~ali.hyder/W/HW.pdf}.

\bibitem{Lai} 
\newblock B. Lai, 
\newblock  A new proof of I. Guerra's results concerning nonlinear biharmonic equations with negative exponents,
\newblock \emph{J. Math. Anal. Appl.}  \textbf{418} (2014), 469-475.

\bibitem{Li} 
\newblock  Y. Li,
\newblock  Remarks on some conformally invariant integral equations: The method of moving spheres,
\newblock \emph{J. Eur. Math. Soc.}, \textbf{6} (2004), 1-28.

\bibitem{Lin} 
\newblock  C. S. Lin, 
\newblock   A classification of solutions of a conformally invariant fourth order equation in $\mathbb{R}^n$,
\newblock \emph{Comment. Math. Helv.},  \textbf{73} (1998), 206-231.


\bibitem{M}
\newblock  L. Martinazzi, 
\newblock  Conformal metrics on $\R^{2m}$ with constant Q-curvature and large volume,
\newblock \emph{Ann. Inst. H. Poincar\'e Anal. Non Lin\'eaire},  \textbf{30} (2013), no. 6, 969-–982.

\bibitem{MR}
\newblock  P. J. McKenna and W. Reichel, 
\newblock  Radial solutions of singular nonlinear biharmonic equations and applications to conformal geometry,
\newblock \emph{Electron. J. Differential Equations}, \textbf{37} (2003), 1-13.

\bibitem{Ngo}
\newblock  Q. A. Ng\^o,
\newblock  Classification of entire solutions of $(-\Delta)^n u+u^{4n-1}=0$ with exact linear growth at infinity in $\mathbb{R}^{2n-1}$,
\newblock \emph{Proc. Amer. Math. Soc.}, \textbf{146} (2018), no. 6, 2585-2600.


\bibitem{WY}
\newblock J. Wei and D. Ye, 
\newblock  Nonradial solutions for a conformally invariant fourth order equation in $\R^4$,
\newblock \emph{Calc. Var. Partial Differential Equations}, \textbf{32} (2008), no. 3, 373-–386.

\bibitem{Xu} 
\newblock  X. Xu, 
\newblock  Exact solutions of nonlinear conformally invariant integral equations in $\mathbb{R}^3$,
\newblock \emph{Adv.  Math.}, \textbf{194} (2005), 485-503.

\bibitem{XY} 
 \newblock  X. Xu and P. Yang, 
 \newblock  On a fourth order equation in $3$-$D$, 
 \newblock \emph{ESAIM:  Control Optim. Calc. Var.},  \textbf{8} (2002), 1029-1042.

\bibitem{Yang-Zhu}
\newblock  P. Yang and M.  Zhu, 
\newblock  On the Paneitz energy on standard three sphere,
\newblock \emph{ESAIM: Control Optim. Calc. Var.}, \textbf{10}  (2004), no. 2, 211-223.

\end{thebibliography}
\end{document}